\newtheorem{thm}{Theorem}[section]
\newtheorem{defi}[thm]{Definition}
\newtheorem{lem}[thm]{Lemma}
\newtheorem{prop}[thm]{Proposition}
\newtheorem{cor}[thm]{Corollary}
\newtheorem{con}[thm]{Construction}
\newtheorem{ex}[thm]{Example}
\DeclarePairedDelimiter\abs{\lvert}{\rvert}
\DeclarePairedDelimiter\set{\{}{\}}
\title{Gorenstein graphic matroids from multigraphs}
\author{Max K\"olbl}
\date{December 2019}
\begin{document}

\maketitle
\begin{abstract}
    A matroid is Gorenstein if its toric variety is. Hibi, Laso\'n, Matsuda, Micha\l{}ek, and Vodi\v{c}ka provided a full graph-theoretic classification of Gorenstein matroids associated to simple graphs.  We extend this classification to multigraphs.
\end{abstract}
\section{Introduction}


Matroids and graphs are among central notions in combinatorics. To any multigraph one associates a matroid: the ground set is the set of edges and a subset is independent if and only if it does not contain a cycle.

Matroids are often represented as lattice polytopes \cite{GGMS}, \cite[Chapter 13]{MSks}. A particularly beautiful connection was unravelled by Gelfand, Goresky, Macpherson and Serganova \cite{GGMS} who noticed that the algebras of tori orbit closures in arbitrary  Grassmannians are exactly the semigroup algebras over base polytopes of representable matroids. This, together with a result of White \cite{white} that these algebras are always normal, are the most fundamental results joining matroid theory with algebra and geometry.

This area of research is very active, with results extending to other homogeneous varieties \cite{Coxeter, flag, Fink}, polymatroids \cite{discrete}, defining equations \cite{wh, Bla, ML} etc. 
Recently, a classification of Gorenstein algebras coming either from multigraphs through independence polytopes or from graphs through base polytopes was provided \cite{Hibi}. However, the case of multigraphs and base polytopes was left open. 

The aim of our article is exactly to fill this gap and extend the results in \cite{Hibi} from graphs to multigraphs. The classification for graphs already relied on highly nontrivial combinatorics. Our idea is to use this classification. Although, there are many more Gorenstein multigraphs than graphs, this is possible and makes our arguments quite short. We precisely analyse which of the results in \cite{Hibi} extend in a straightforward way and which need to be adjusted. The final results for multigraphs are in some sense easier than for graphs. For example, in \cite{Hibi} one needs to consider constructions that take into account arbitrarily many graphs. We prove in Theorems \ref{mainThm1} and \ref{mainThm2} that any Gorenstein multigraph may be obtained from a cycle or $K_4$ in steps that involve only two multigraphs, which leads us to our main result.

\begin{thm}
Let $G$ be a $2$-connected multigraph. The following conditions are equivalent:
\begin{itemize}
    \item[(1)] $G$ is Gorenstein.
    \item[(2)] There exists a positive integer $\delta$ such that $G$ can be obtained by Constructions \ref{pathCon} and \ref{deltaCon}, and \ref{reverseExampleCon} from a $\delta$-cycle for $\delta>0$, or from a clique $K_4$ for $\delta = 2$.
\end{itemize}
\end{thm}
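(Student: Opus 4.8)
The theorem is, by design, the capstone that fuses the structure Theorems \ref{mainThm1} and \ref{mainThm2} with the invariance of the three constructions under the Gorenstein property. I would split into the two implications and delegate the forward one to the structure theorems and the backward one to invariance.

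For $(2)\Rightarrow(1)$ I would argue by induction on the number of applications of Constructions \ref{pathCon}, \ref{deltaCon} and \ref{reverseExampleCon} needed to build $G$. The base cases are that a $\delta$-cycle and the clique $K_4$ (the latter only relevant for $\delta=2$) are Gorenstein: the first is immediate from the explicit description of its base polytope, and the second is contained in the simple-graph case treated in \cite{Hibi}. For the inductive step I would invoke the propositions established earlier asserting that each of the three constructions, run with a fixed value of $\delta$, sends Gorenstein input multigraph(s) to a Gorenstein multigraph, and that each preserves $2$-connectedness. Since $G$ is assumed $2$-connected, this yields $(1)$.

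For $(1)\Rightarrow(2)$ I would simply unwind Theorems \ref{mainThm1} and \ref{mainThm2}. Given a $2$-connected Gorenstein multigraph $G$, Theorem \ref{mainThm1} should supply a positive integer $\delta$ together with an explicit sequence of the three constructions, all carrying this same $\delta$, that builds $G$ from a core equal to a $\delta$-cycle or to $K_4$; this step leans on the classification of \cite{Hibi} applied to the underlying simple graph of $G$, with the multiplicities of the parallel classes recovered through Construction \ref{deltaCon}. Theorem \ref{mainThm2} should then settle the remaining case, where the core is $K_4$, and in particular force $\delta=2$ there. Stitching the two conclusions together produces exactly the description in $(2)$; beyond this, the only thing to track is the case distinction on the type of core and the resulting value of $\delta$.

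The real difficulty is concentrated inside Theorems \ref{mainThm1} and \ref{mainThm2}, but two points deserve attention even at the present level. First, the passage from the simple-graph classification to multigraphs must be exhaustive: one has to rule out a Gorenstein $2$-connected multigraph that is missed because collapsing its parallel classes to single edges alters the matroid base polytope in a way the simple-graph results cannot detect. Second, the index $\delta$ must behave as a genuine invariant along the whole construction sequence — every construction has to interact with $\delta$ uniformly, so that a single $\delta$ can be used end to end and is constrained to equal $2$ precisely when $K_4$ appears as the core. Once these are secured, the equivalence follows at once, and — as the introduction stresses — the argument stays short because nearly everything is delegated to the already-resolved simple-graph case.
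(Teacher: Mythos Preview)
Your proposal is correct and matches the paper's approach: the theorem is not given a separate proof in the paper but is stated as the combined conclusion of Theorems \ref{mainThm1} and \ref{mainThm2}, with Theorem \ref{weightFcnThm} providing the bridge between ``Gorenstein'' and the combinatorial condition $(\spadesuit)_\delta$. The only small imprecision is that the integer $\delta$ is supplied by Theorem \ref{weightFcnThm} (not by Theorem \ref{mainThm1}), and the passage from multigraphs to simple graphs and back goes via Construction \ref{pathCon}/\ref{reverseExampleCon} rather than \ref{deltaCon}.
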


\section{Classification of Gorenstein Multigraphs}


We start by recalling the basic definitions.
Let $M$ be a matroid with ground set $E$ and set of bases $\mathfrak{B}$. 

\begin{defi}[Base polytope]
The \textit{base polytope} of $M$ is defined as the convex hull of the incidence vectors of the elements of $\mathfrak{B}$ in $\mathbb{Z}^{\abs{E}}$. That is, the convex hull of the vectors of the form $\sum_{b\in B} e_b$ with $B\in\mathfrak{B}$.
\end{defi}

\begin{defi}[Graphic matroid, Definition 1.5 \cite{Hibi}]
Let $G=(V, E)$ be a finite undirected (multi-)graph. The graphic matroid corresponding to $G$, denoted by $M(G)$, is a matroid on the set $E$ whose independent sets are the forests in $G$. A set $B\subset E$ is a basis of $M(G)$ if and only if $B$ is a spanning forest of $G$.
\end{defi}

Note that if a connected graph has a separating vertex (i.e. a vertex which will disconnect the graph upon removal), the underlying graphic matroid will be the direct sum of two matroids. A matroid is then called \textit{disconnected}. In order for a graphic matroid to be connected then, we require the underlying graph to be $2$-connected. 

\begin{ex}\label{GorensteinEx}
Let $G=(V,E)$ be a multigraph with $V$ consisting of the vertices $v_1$ and $v_2$, and let $\abs{E} = n$ with $n>1$, where every edge connects $v_1$ and $v_2$. Then every spanning tree in $G$ is a single edge and the set $\mathfrak{B}$ for the associated matroid $M(G)$ may be identified with $E$. Thus, the base polytope $B(M(G))$ is the standard $n-1$-simplex.
\end{ex}

Next we provide the combinatorial definition of Gorenstein polytopes. It is equivalent to the fact that the associated semigroup algebra is Gorenstein \cite{olderHibi, Hibi}.

\begin{defi}[Gorenstein]\label{GorensteinDef}
For a positive integer $\delta$, a full dimensional normal lattice polytope $P\subset\mathbb{R}^d$ is called $\delta$-Gorenstein if there exists a positive integer $\delta$ and a lattice point $v\in\delta P$ such that for every supporting hyperplane of the cone over $P$ its reduced equation $h$ (i.e.~an equation for which $h(\mathbb{Z}^d) = \mathbb{Z}$) satisfies $h(v) = 1$.

A normal lattice polytope $P$ is Gorenstein if it is $\delta$-Gorenstein (in the lattice it spans affinely) for a positive integer $\delta$.
\end{defi}

Notice that in our case, $\delta$ can never be one, because base polytopes are always contained in a hypersimplex, which does not have any interior lattice points.

\begin{ex}[Example \ref{GorensteinEx} continued]\label{simplexEx}
The standard $(n-1)$-simplex is Gorenstein with $\delta = n$ and $v=(1,1,\ldots,1)$. This can be seen by looking at the facets of $nB(M(G))$: all facets lie in hyperplanes $h_i(v) = v_i$. 
\end{ex}

Below we provide the facet description of base polytopes.

\begin{lem}[Lemma 3.2 \cite{Hibi}]\label{FacetsLem}
Let $M$ be a connected matroid on the ground set $E$ with the rank function $r$. Then the base polytope $B(M)$ is full dimensional in an affine sublattice of $\mathbb{Z}^E$ given by $\sum_{e\in E}x_e=r(E)$ and all supporting hyperplanes (ergo facets) are of one of the following two types:
\begin{itemize}
    \item[(1)] $x_e\geq 0$, if $M\setminus\{e\}$ is connected,
    \item[(2)] $\sum_{e\in F} x_e \leq \frac{r(F)}{r(E)}\sum_{e\in E} x_e$, where $F\subsetneq E$ is a \emph{good flat} - a flat such that both the restriction of $M$ to $F$ and the contraction of $F$ in $M$ are connected.
\end{itemize}
\end{lem}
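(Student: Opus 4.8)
The plan is to derive the statement from the classical polymatroid inequality description of the base polytope and then to determine which of those inequalities are facet-defining. First I would recall Edmonds' description
\[
  B(M)=\set{x\in\mathbb{R}^E : x_e\geq 0\ \forall e,\quad \textstyle\sum_{e\in E}x_e=r(E),\quad \textstyle\sum_{e\in S}x_e\leq r(S)\ \forall\, S\subseteq E},
\]
which follows from a greedy/exchange argument: a basis indicator satisfies $\sum_{e\in S}x_e\leq r(S)$ because $B\cap S$ is independent in $M|S$, and conversely every vertex of the right-hand polytope is a basis indicator. The equation $\sum_e x_e=r(E)$ is exactly the affine span, so the task is to see which of the inequalities $x_e\geq 0$ and $\sum_{e\in S}x_e\leq r(S)$ have codimension one; note the homogeneous form $\sum_{e\in F}x_e\leq\frac{r(F)}{r(E)}\sum_{e\in E}x_e$ in (2) is just $\sum_{e\in F}x_e\leq r(F)$ rewritten using $\sum_e x_e=r(E)$, as is natural for the cone over $B(M)$.

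Second I would reduce the family $\sum_{e\in S}x_e\leq r(S)$ to flats: if $S$ has closure $\bar S\supsetneq S$ then $r(\bar S)=r(S)$, so on all of $B(M)$ (using $x_e\geq 0$ on $\bar S\setminus S$) the inequality for $S$ is implied by the one for $\bar S$; also $S=E$ only gives back the defining equation, so it suffices to take proper flats $F$. Hence every facet of $B(M)$ is supported either by some $x_e\geq 0$ or by some $\sum_{e\in F}x_e\leq r(F)$ with $F\subsetneq E$ a flat, and conversely these inequalities together with the defining equation cut out $B(M)$.

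The engine of the facet count is the dimension formula $\dim B(M)=\abs{E}-c(M)$, where $c(M)$ is the number of connected components; in particular $\dim B(M)=\abs{E}-1$ for connected $M$. I would prove this by producing enough bases: a connected matroid has a circuit through any two given elements, which gives exchanges $B\mapsto B\mathbin{\triangle}\{e,f\}$ whose indicator differences $e_f-e_e$ span the direction space $\set{\sum_e x_e=0}$. Granting this, the face $\set{x\in B(M):x_e=0}$ equals $B(M\setminus e)$ (a basis avoiding $e$ is the same as a basis of the deletion, since connected $M$ with $\abs E\geq 2$ has no loops or coloops), of dimension $\abs E-1-c(M\setminus e)$, hence a facet exactly when $M\setminus e$ is connected, which is (1). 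The face $\set{x\in B(M):\sum_{e\in F}x_e=r(F)}$ is the set of indicators of bases $B$ for which $B\cap F$ is a basis of $M|F$; such bases are obtained by freely pairing a basis of $M|F$ with a basis of $M/F$, so the face is $B(M|F)\times B(M/F)$, of dimension $(\abs F-c(M|F))+(\abs{E\setminus F}-c(M/F))=\abs E-c(M|F)-c(M/F)$, hence a facet exactly when $c(M|F)=c(M/F)=1$, i.e.\ when $F$ is a good flat, which is (2).

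The step I expect to be the main obstacle is the dimension formula $\dim B(M)=\abs{E}-c(M)$ together with the two face identifications; once these are in place the rest is bookkeeping. Two points require care: that "pair a basis of $M|F$ with a basis of $M/F$'' is genuinely a bijection onto the bases achieving $\sum_{e\in F}x_e=r(F)$, and that the closure reduction of the second paragraph does not silently discard a facet — it does not, precisely because a supporting inequality coming from a non-flat $S$ is dominated on all of $B(M)$ by the one coming from $\bar S$. The degenerate cases (loops, coloops, $F=\emptyset$) are harmless under the hypothesis that $M$ is connected with at least two elements.
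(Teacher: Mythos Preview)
The paper does not supply a proof of this lemma; it is simply quoted as Lemma~3.2 of \cite{Hibi} and used without argument, so there is no proof in the present paper to compare your proposal against.

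That said, your outline is the standard route to this result and is correct. The three ingredients you isolate---Edmonds' inequality description of $B(M)$, the reduction from arbitrary subsets to flats via closure, and the dimension formula $\dim B(M)=\lvert E\rvert-c(M)$ together with the face identifications $\{x_e=0\}\cong B(M\setminus e)$ and $\{\sum_{e\in F}x_e=r(F)\}\cong B(M|F)\times B(M/F)$---are exactly what one needs, and the facet count $\lvert E\rvert-c(M|F)-c(M/F)=\lvert E\rvert-2$ iff both minors are connected is the right computation. This is the argument one expects to find behind the citation to \cite{Hibi}, so your approach is not only correct but the canonical one.
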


If $M$ comes from a multigraph, we obtain the following result.

\begin{cor}\label{FacetsCor}
Let $G=(V,E)$ be a $2$-connected multigraph. The polytope $B(M(G))$ has two types of supporting hyperplanes (ergo facets):
\begin{itemize}
    \item[(1)] $x_e\geq 0$, if $G\setminus e$ is $2$-connected,
    \item[(2)] $\sum_{e\in G{|}_S} x_e \leq \frac{\abs{S}-1}{\abs{V}-1} \sum_{e\in E}x_e$, where $S\subsetneq V$ is a \emph{good flat} - a subset such that both the restriction of $G$ to $S$ and the contraction of $E(S)$ in $G$ are $2$-connected.
\end{itemize}
\end{cor}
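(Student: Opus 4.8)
The plan is to specialise Lemma~\ref{FacetsLem} to the connected matroid $M = M(G)$ and rewrite each of its two clauses in graph language via the standard dictionary for graphic matroids: deletion and contraction are the graph operations, $M(G)\setminus e = M(G\setminus e)$ and $M(G)/F = M(G/F)$; the rank of a set of edges equals the number of vertices it touches minus the number of components of the spanning subgraph it forms, so $r(E) = \abs{V}-1$ and $r(E(G{|}_S)) = \abs{S}-1$ whenever $G{|}_S$ is connected; and --- the fact used most often --- a loopless multigraph $H$ has $M(H)$ connected if and only if $H$ is $2$-connected (parallel edges lie on a common circuit, in a $2$-connected graph every two edges lie on a common cycle, and a cut vertex or a disconnection splits $M(H)$ as a non-trivial direct sum). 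Since $G$ is $2$-connected, $M(G)$ is connected, so Lemma~\ref{FacetsLem} applies and $B(M(G))$ is full dimensional in $\set{\sum_{e\in E}x_e = \abs{V}-1}$.

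For the type~(1) facets: since $G$ is $2$-connected it has no bridge, hence $G\setminus e$ is connected, and (checking the small cases, such as $\abs{V}=2$ with parallel edges as in Example~\ref{GorensteinEx}) it has no isolated vertex; so ``$M(G)\setminus e$ is connected'' translates verbatim to ``$G\setminus e$ is $2$-connected''.

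For the type~(2) facets I first describe the (good) flats of $M(G)$. A set $F\subseteq E$ is a flat exactly when, taking $W_1,\dots,W_k$ to be the vertex sets of the connected components of $(V,F)$, one has $F = \bigcup_j E(G{|}_{W_j})$ with every $G{|}_{W_j}$ connected; then $M(G){|}_F = \bigoplus_j M(G{|}_{W_j})$. This direct sum is connected if and only if all but one of the $W_j$ are singletons, say the exceptional block is $S$ with $\abs{S}\ge 2$ and $F = E(G{|}_S)$, and $M(G){|}_F = M(G{|}_S)$ is then connected iff $G{|}_S$ is $2$-connected. For the contraction, $M(G)/F = M(G/F)$, and collapsing the connected subgraph $G{|}_S$ to a point produces a loopless multigraph $G/E(S)$, so ``$M(G)/F$ is connected'' becomes ``$G/E(S)$ is $2$-connected''. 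Plugging $r(F) = \abs{S}-1$ and $r(E) = \abs{V}-1$ into the inequality of Lemma~\ref{FacetsLem} gives $\sum_{e\in G{|}_S}x_e \le \frac{\abs{S}-1}{\abs{V}-1}\sum_{e\in E}x_e$, and $F\subsetneq E$ is equivalent to $S\subsetneq V$ since $G$ has no isolated vertices.

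The one step that really needs care is the flat computation --- above all, arguing that connectedness of $M(G){|}_F$ forces precisely one non-singleton block, so that every good flat is of the form $E(G{|}_S)$ for a single vertex subset $S$ --- together with the few small-graph and isolated-vertex checks needed to make the $2$-connectedness translations exact; the rest is a direct unwinding of the deletion/contraction/rank dictionary for graphic matroids.
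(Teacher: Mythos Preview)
Your proposal is correct and is essentially the same argument the paper invokes: the paper's proof is simply the observation that Corollary~3.3 of \cite{Hibi} (which specialises Lemma~\ref{FacetsLem} to graphic matroids via the standard deletion/contraction/rank dictionary) nowhere uses simplicity of $G$, and you have written that specialisation out in full. The only difference is presentational --- the paper defers to the reference, while you carry out the translation explicitly, including the identification of good flats with vertex subsets $S$.
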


\begin{proof}
This corollary is the generalisation of Corollary 3.3 in \cite{Hibi} to multigraphs.
However, the fact that $G$ is simple is irrelevant for the original proof, so it does not need to be restated here.
\end{proof}

\begin{ex}[Examples \ref{GorensteinEx}, \ref{simplexEx} continued]
We gave the facet description of $nB(M(G))$ before, but only based on the geometry of the polytope itself. Now we can see how it can be directly inferred from $G$. By (1) in Corollary \ref{FacetsCor}, every edge corresponds to one of the $h_i(v)$.
\end{ex}

The facet equations of $B(M(G))$ given in Corollary \ref{FacetsCor} are already reduced and the facets of $\delta B(M(G))$ for a positive integer $\delta$ can be easily obtained by scaling. Thus, $B(M(G))$ is $\delta$-Gorenstein if and only if there is a point $v$ for which the scaled and reduced equations $h$ satisfy $h(v) = 1$. Both the point $v$ and the condition it has to satisfy is encoded on multigraphs directly in the following theorem.

\begin{thm}\label{weightFcnThm}
Fix a positive integer $\delta$. Let $G=(V,E)$ be a $2$-connected multigraph. Then the following are equivalent.
\begin{itemize}
    \item[(i)] The polytope $B(M(G))$ is $\delta$-Gorenstein
    \item[(ii)] $G$ possesses a weight function $w\colon E\longrightarrow\{1,\delta-1\}$ defined by
    \begin{align*}
        w(e) = \left\{\begin{matrix}
        1 & \text{if } G\setminus e \text{ is $2$-connected} \\
        \delta-1 & \text{if } G/ e \text{ is $2$-connected}
        \end{matrix}\right.
    \end{align*}
    which satisfies the following equalities $(\spadesuit)_\delta$:
    \begin{itemize}
        \item[(1)] $w(E) = \delta (\abs{V}-1)$ and (we use the notation $w(E) = \sum_{e\in E} w(e)$)
        \item[(2)] $w(E(S)) +1 = \delta (\abs{S}-1)$ for every good flat $S$ in the sense of \ref{FacetsCor} (where $E(S)$ is the set of edges with endpoints in $S$)
    \end{itemize}
    \item[(iii)] $G$ possesses a weight function $w$ like in $(ii)$ which satisfies the following equalities $(\heartsuit)_\delta$:
    \begin{align*}
        w(E(S)) + k(S) = \delta (\abs{S}-1) \text{ for every $2$-connected set $S\subseteq V$}
    \end{align*}
    where $k(S)$ is the number of $2$-connected components in $G/G_{\mid S}$ (note that $k(V) = 0$).
\end{itemize}
\end{thm}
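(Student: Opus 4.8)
\emph{Plan.} I would establish $(i)\Leftrightarrow(ii)$ by translating the facet description of Corollary \ref{FacetsCor}, and $(ii)\Leftrightarrow(iii)$ by a short induction built on a structural fact about $G/G|_S$. For the first equivalence: by Definition \ref{GorensteinDef} and the remark preceding the theorem, $B(M(G))$ is $\delta$-Gorenstein exactly when there is a lattice point $v$ in the affine lattice spanned by $\delta B(M(G))$ on which every reduced facet equation equals $1$. Listing these equations for $\delta B(M(G))$ from Corollary \ref{FacetsCor} --- the affine-hull equation $\sum_{e\in E}x_e=\delta(\abs{V}-1)$; for each $e$ with $G\setminus e$ $2$-connected the equation $x_e$; and for each good flat $S$ the equation $\delta(\abs{S}-1)-\sum_{e\in E(S)}x_e$ --- the requirement ``value $1$ at $v$'' becomes $\sum_e v_e=\delta(\abs{V}-1)$, $v_e=1$ whenever $G\setminus e$ is $2$-connected, and $\sum_{e\in E(S)}v_e+1=\delta(\abs{S}-1)$ for every good flat $S$. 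The last two are precisely $(\spadesuit)_\delta$, so it remains to identify $v$ with the weight function $w$. The value $v_e=1$ for $G\setminus e$ $2$-connected is built in; when $G/e$ is $2$-connected I would apply the good-flat equation to $S=\{u,v\}$ (the endpoints of $e$), getting $\sum_{f\in E(S)}v_f=\delta-1$, which forces $v_e=\delta-1$ if $e$ has no parallel edge, and otherwise --- using $v_f\in\{1,\delta-1\}$, $\delta>1$, $\abs{E(S)}\ge 2$ --- forces every $v_f=1$ and hence every $G\setminus f$ $2$-connected, so that subcase does not occur. Conversely $v:=w$ satisfies every reduced facet equation with value $1$. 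I would also invoke the standard fact that every edge of a $2$-connected multigraph makes at least one of $G\setminus e$, $G/e$ $2$-connected, which is what makes $w$ well defined (a double case forcing $\delta=2$).

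\emph{$(iii)\Rightarrow(ii)$ and the structural fact.} Direction $(iii)\Rightarrow(ii)$ is immediate: a good flat $S\subsetneq V$ has $G/G|_S$ $2$-connected, hence $k(S)=1$, so $(\heartsuit)_\delta$ for $S$ is $(\spadesuit)_\delta(2)$; and $k(V)=0$, so $(\heartsuit)_\delta$ for $V$ is $(\spadesuit)_\delta(1)$. For $(ii)\Rightarrow(iii)$ the engine is the observation that for any $2$-connected $S\subseteq V$ the multigraph $H:=G/G|_S$ is a \emph{bouquet at} $s$: every block of $H$ contains the contracted vertex $s$, which is its only cut vertex. (If some block avoided $s$, then moving away from $s$ in the block--cut tree of $H$ one reaches a leaf block $B'$ with $s\notin V(B')$ and a single cut vertex $c'$; then every vertex of $V(B')\setminus\{c'\}$ lies in $V\setminus S$ with all its $G$-neighbours inside $V(B')$, so the preimage of $c'$ is a cut vertex of $G$, impossible.) Writing $k:=k(S)$ and letting $B_1,\dots,B_k$ be the blocks, the preimage $T_i\subseteq V$ of $V(B_i)$ then has $S\subseteq T_i$, $G|_{T_i}$ is $2$-connected (its contraction by $G|_S$ is $B_i$), $E(T_i)=E(S)\sqcup\tilde E(B_i)$ with $\tilde E(B_i)$ the preimage of $E(B_i)$, $\abs{T_i}-1=(\abs{S}-1)+(\abs{V(B_i)}-1)$, and $G/G|_{T_i}=H/B_i$ is the bouquet of the remaining $k-1$ blocks, so $k(T_i)=k-1$.

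\emph{The induction and the main obstacle.} I would finish $(ii)\Rightarrow(iii)$ by strong induction on $k(S)$: the cases $k(S)=0$ ($S=V$) and $k(S)=1$ ($S$ a good flat) are $(\spadesuit)_\delta(1)$ and $(\spadesuit)_\delta(2)$, and for $k\ge 2$ the inductive hypothesis applied to each $T_i$ gives
\[
w(E(S))+w(\tilde E(B_i))+(k-1)=\delta(\abs{S}-1)+\delta(\abs{V(B_i)}-1);
\]
summing over $i$ and using that the $\tilde E(B_i)$ partition $E\setminus E(S)$, that $\sum_i(\abs{V(B_i)}-1)=\abs{V(H)}-1=\abs{V}-\abs{S}$, and $w(E)=\delta(\abs{V}-1)$ from $(\spadesuit)_\delta(1)$, everything collapses to $(k-1)\,w(E(S))=(k-1)\bigl(\delta(\abs{S}-1)-k\bigr)$, and dividing by $k-1\ne 0$ yields $(\heartsuit)_\delta$ for $S$. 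The main obstacle is the bouquet lemma for $H$ together with the observation that passing to a block-preimage $T_i$ drops $k(S)$ by exactly one; once these are in place the induction is routine. On the $(i)\Leftrightarrow(ii)$ side the only point not already in \cite{Hibi} is the bookkeeping for parallel edges --- choosing the value of $w$ on a multi-edge and ruling out the spurious subcase above --- the rest of that argument being unchanged from the simple-graph case.
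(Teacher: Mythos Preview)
Your argument is correct. The paper's own proof is a two-line citation: it asserts that $(i)\Leftrightarrow(ii)$ and $(ii)\Leftrightarrow(iii)$ are the multigraph versions of Theorems~3.1 and~3.4 of \cite{Hibi}, and that neither of those proofs uses simplicity of $G$. You instead give a self-contained proof --- reading off $(\spadesuit)_\delta$ from the reduced facet equations of Corollary~\ref{FacetsCor}, and deriving $(\heartsuit)_\delta$ via the bouquet structure of $G/G|_S$ together with an induction on $k(S)$. This is almost certainly the same mathematics that lives inside \cite{Hibi}; you are reconstructing the cited argument rather than taking a genuinely different route. What your write-up buys is independence from the reference and an explicit check of the one place multigraphs could matter (parallel edges in the identification $v=w$); what the paper's version buys is brevity. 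One small cleanup: in your $(i)\Leftrightarrow(ii)$ discussion, the subcase ``$G/e$ is $2$-connected and $e$ has a parallel edge'' is in fact vacuous, since contracting $e$ turns any parallel edge into a loop and disconnects the matroid --- so you can drop the $v_f\in\{1,\delta-1\}$ detour and simply note that a parallel edge forces $G\setminus e$ $2$-connected, hence $v_e=1=w(e)$.
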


\begin{proof}
$(i)\Leftrightarrow(ii)$ is the multigraph version of Theorem 3.1 and $(ii)\Leftrightarrow (iii)$ is the multigraph version of Theorem 3.4 in \cite{Hibi}. Neither of the original proofs rely on $G$ being simple, so they can be used here.
\end{proof}

\begin{ex}[Example \ref{GorensteinEx} continued]
Let $G$ be the loop-free multigraph with two vertices and $n$ edges, $n>1$. We obtain $w(e)=1$ for every edge $e$, because upon deletion, $G$ will still be $2$-connected. It is easy to check that $w$ satisfies $(\spadesuit)_n$.
\end{ex}


We will next fully classify multigraphs $G$ for which $B(M(G))$ is Gorenstein.
The classification will centre around the following family of constructions.

\begin{con}\label{universalCon}
For a positive integer $\delta\geq 2$, let $G_1$ and $G_2$ be $2$-connected multigraphs with vertices $u_1, v_1$ and $u_2,v_2$ respectively. Let $F_1\neq\emptyset$ (resp. $F_2\neq\emptyset$) be a set of parallel edges between $u_1$ and $v_1$ (resp. $u_2$ and $v_2$). We construct the \emph{$\delta$-gluing} of $G_1$ and $G_2$ along $F_1$ and $F_2$ by taking their direct sum and identifying $u_1$ with $v_1$ and $u_2$ with $v_2$, and substituting $F_1\cup F_2$ with $w(F_1)+w(F_2)-\delta$ parallel edges where $w$ is the weight function from Theorem \ref{weightFcnThm} (note that all of these edges have weight $1$).
\end{con}

\begin{ex}
Let $\delta=3$, $G_1=G_2=C_3$, where $C_3$ is the cycle with $3$ edges. Let $u_1,v_1$ (resp. $u_2, v_2$) be vertices in $G_1$ (resp. $G_2$) connected by an edge $e_1$ (resp. $e_2$). Then both $e_1$ and $e_2$ have weight $\delta-1=2$. Thus, the $3$-gluing of $G_1$ and $G_2$ along $e_1$ and $e_2$ is precisely the cycle $C_{4}$ with a chord. For $\delta=2$, the same construction yields $C_{4}$ without a chord.
\end{ex}

\begin{prop}\label{universalConProp}
Let $G_1$ and $G_2$ be $2$-connected multigraphs with vertices $u_1, v_1$ and $u_2,v_2$ respectively. Let $F_1\neq\emptyset$ (resp. $F_2\neq\emptyset$) be a set of parallel edges between $u_1$ and $v_1$ (resp. $u_2$ and $v_2$). Let $G$ be the $\delta$-gluing of $G_1$ and $G_2$ along $F_1$ and $F_2$. Then $G$ satisfies $(\spadesuit)_\delta$ if $G_1$ and $G_2$ do, and $G_1$ satisfies $(\spadesuit)_\delta$ if $G_2$ and $G$ do.
\end{prop}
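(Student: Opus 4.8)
The plan is to prove the statement with the equalities $(\heartsuit)_\delta$ in place of $(\spadesuit)_\delta$ — legitimate by the equivalence $(ii)\Leftrightarrow(iii)$ in Theorem~\ref{weightFcnThm} — because $(\heartsuit)_\delta$ ranges over \emph{all} $2$-connected sets and carries the correction term $k(S)$, which makes it behave additively under the gluing and spares us any classification of good flats. Write $u,v$ for the two vertices produced by the identifications, so that $V(G)=V(G_1)\cup V(G_2)$ with $V(G_1)\cap V(G_2)=\{u,v\}$, hence $\abs{V(G)}-1=(\abs{V(G_1)}-1)+(\abs{V(G_2)}-1)-1$, and $E(G)=(E(G_1)\setminus F_1)\sqcup(E(G_2)\setminus F_2)\sqcup F$ with $\abs{F}=w(F_1)+w(F_2)-\delta$. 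First I would record that $G$ is $2$-connected (each $G_i$ is, and $G_i\setminus F_i$ is connected and joins $u$ to $v$), and that weights are inherited: for $e\in E(G_i)\setminus F_i$ we have $w_G(e)=w_{G_i}(e)$, since passing from $G_i$ to $G$ merely deletes the bundle $F_i$, attaches the connected graph $G_{3-i}\setminus F_{3-i}$ at $u,v$, and installs the new bundle $F$, none of which affects whether $G\setminus e$ is $2$-connected, while the edges of $F$ have weight $1$ by construction. Summing, $w_G(E(G))=w_{G_1}(E(G_1))+w_{G_2}(E(G_2))-\delta$; note the $-\delta$, which is exactly what the $-1$ in the vertex count demands.

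The heart of the matter is a structural lemma on the $2$-connected sets of $G$. Since $\{u,v\}$ separates $V(G_1)\setminus\{u,v\}$ from $V(G_2)\setminus\{u,v\}$, a Menger-type argument (two internally disjoint paths between vertices on opposite sides force both $u$ and $v$ into $S$) shows that a set $S\subseteq V(G)$ inducing a $2$-connected subgraph satisfies either (a) $S\subseteq V(G_i)$ for some $i$ and $\{u,v\}\not\subseteq S$, or (b) $\{u,v\}\subseteq S$, in which case, writing $S_i=S\cap V(G_i)$, both $G_1|_{S_1}$ and $G_2|_{S_2}$ are $2$-connected. In case (a) we have $E_G(S)=E_{G_i}(S)$ with matching weights, and — using that $G_i\setminus F_i$ is connected, so the $F_i$-edges do not form a bridge-block and the images of $u$ and $v$ sit in a single block of $G_i/G_i|_S$ that merely absorbs the attached copy of $G_{3-i}\setminus F_{3-i}$ — also $k_G(S)=k_{G_i}(S)$, so $(\heartsuit)_\delta$ for $S$ reads the same in $G$ and in $G_i$. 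In case (b), arguing as in the first paragraph gives $w_G(E_G(S))=w_{G_1}(E_{G_1}(S_1))+w_{G_2}(E_{G_2}(S_2))-\delta$, and since the $V(G_1)$- and $V(G_2)$-parts of $G/G|_S$ meet only in the image of $S$ we get $k_G(S)=k_{G_1}(S_1)+k_{G_2}(S_2)$; combined with $\abs{S}=\abs{S_1}+\abs{S_2}-2$ these produce the key identity
\[
w_G(E_G(S))+k_G(S)-\delta(\abs{S}-1)=\sum_{i=1}^{2}\Bigl(w_{G_i}(E_{G_i}(S_i))+k_{G_i}(S_i)-\delta(\abs{S_i}-1)\Bigr),
\]
valid for every $2$-connected $S$ of $G$ with $\{u,v\}\subseteq S$ (for $S=V(G)$ this specializes to the displayed $w$-identity of the first paragraph).

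Granting the lemma, both halves of the proposition follow immediately. If $G_1$ and $G_2$ satisfy $(\heartsuit)_\delta$, then for every $2$-connected $S$ of $G$ the right-hand side above vanishes termwise — in case (a) by the identification of the two equations, in case (b) because $S_1,S_2$ are then $2$-connected sets of $G_1,G_2$ — so $G$ satisfies $(\heartsuit)_\delta$. Conversely, suppose $G$ and $G_2$ satisfy $(\heartsuit)_\delta$ and let $S_1$ be a $2$-connected set of $G_1$. If $\{u,v\}\not\subseteq S_1$ then $S_1$ is also a $2$-connected set of $G$ with the same associated equation, which holds. If $\{u,v\}\subseteq S_1$, apply the identity to $S:=S_1\cup V(G_2)$, so that $S\cap V(G_1)=S_1$ and $S\cap V(G_2)=V(G_2)$: the left-hand side vanishes because $G$ satisfies $(\heartsuit)_\delta$, the $i=2$ summand vanishes because $w_{G_2}(E(G_2))=\delta(\abs{V(G_2)}-1)$, hence the $i=1$ summand vanishes, i.e.\ $(\heartsuit)_\delta$ holds for $S_1$ in $G_1$. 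Thus $G_1$ satisfies $(\heartsuit)_\delta$, equivalently $(\spadesuit)_\delta$.

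I expect the main obstacle to be the structural lemma, and inside it the block-count identities $k_G(S)=k_{G_1}(S_1)+k_{G_2}(S_2)$ and the case-(a) analogue $k_G(S)=k_{G_i}(S)$: these need a careful look at how the $2$-connected components of a contracted multigraph transform when an edge-bundle between two vertices is replaced by the parallel combination of a (possibly empty) bundle and a connected graph joining those same two vertices. A few borderline configurations — $F_i$ a single edge, or $\abs{F}=0$ — also need to be checked by hand, but they cause no trouble exactly because $G_i\setminus F_i$ remains connected. Everything else is linear bookkeeping with the identities above.
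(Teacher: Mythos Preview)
Your argument is sound and takes a genuinely different route from the paper. The paper works directly with $(\spadesuit)_\delta$: its key structural observation is that a good flat $S$ of $G$ containing both glued vertices must swallow all of $V(G_1)$ or all of $V(G_2)$ (otherwise the contraction $G/G|_S$ acquires a separating vertex at the image of $S$), so in the $\{u,v\}\subseteq S$ case one side is always the whole $G_i$ and the computation reduces to a single good flat $S'$ in the other $G_j$; the reverse direction similarly enlarges a good flat of $G_1$ by adjoining all of $V(G_2)$. You instead pass to $(\heartsuit)_\delta$ and allow arbitrary $2$-connected $S$, obtaining a clean two-term additive identity by carrying the correction term $k(S)$. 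This is more uniform and spares you from ever verifying that a set is a good flat when moving between $G$ and the $G_i$, but it shifts the work onto the block-count claims $k_G(S)=k_{G_i}(S)$ in case~(a) and $k_G(S)=k_{G_1}(S_1)+k_{G_2}(S_2)$ in case~(b), together with the assertion that each $G_i|_{S_i}$ is $2$-connected --- the last holds because any path in $G|_S\setminus y$ that detours through $G_{3-i}$ can be shortcut by an $F_i$-edge in $G_i$, and you should say so explicitly rather than gesture at Menger. Both routes are valid; the paper's is shorter once the good-flat containment fact is noted, while yours packages the bookkeeping into a single reusable identity at the cost of the block-tree verifications you correctly flag as the crux.
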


\begin{proof}
In both cases, the first equality is easy to check.

Let now $G_1$ and $G_2$ satisfy $(\spadesuit)_\delta$, let $S$ be a good flat in $G$ and let $u$ and $v$ be the glued points.
If $S$ does not contain both $u$ and $v$, it lies completely in either $G_1$ or $G_2$.
Further, $G_{1\mid S}=G_{\mid S}$ (resp. $G_{2\mid S}=G_{\mid S}$) is $2$-connected.
If contracting $S$ in $G_1$ (resp. $G_2$) would lead to a separating vertex, it would do so in $G$ as well. Hence $S$ is a good flat in $G_1$ (resp. $G_2$) and satisfies $(\spadesuit)_\delta$.

If $S$ contains both $u$ and $v$, the contraction of $S$ in particular contracts the edges between $u$ and $v$.
Hence, the contraction will have a separating vertex, unless either $G_1$ or $G_2$ are fully included in $S$.
Thus, without loss of generality, we can write $S$ as $S^\prime\cup V(G_2)$, where $S^\prime\cap V(G_2)= \set{u,v}$.
Both the contraction of $S^\prime$ in $G_1$, which is equal to the contraction of $S$ in $G$, and $G_{1{\mid S^\prime}}$ are $2$-connected.
Hence $S^\prime$ forms a good flat in $G_1$.
We obtain
\begin{align*}
    w(E(S))+1 &= w(E(S^\prime)) +w(E(G_2)) -\delta +1= \\
    &=\delta(\abs{S^\prime}-1)-1 +\delta(\abs{V(G_2})-1)-\delta +1= \\
    &=\delta(\abs{S^\prime}-1 + \abs{V(G_2)}-1-1)-1+1 = \delta(\abs{S}-1)
\end{align*}
where the $\delta$ in the first line comes from substituting $F_1$ and $F_2$ with $w(F_1)+w(F_2)-\delta$ parallel edges.
Hence, $G$ satisfies $(\spadesuit)_\delta$

Let now $G_2$ and $G$ satisfy $(\spadesuit)_\delta$.
Let $S$ be a good flat in $G_1$.
If $S$ does not contain both $v_1$ and $v_2$, we have $G_{1\mid S}=G_{\mid S}$ and the contraction of $S$ in $G$ is $2$-connected.
Hence, $S$ is a good flat in $G$ and satisfies $(\spadesuit)_\delta$.

Suppose now $v_1$ and $v_2$ are in $S$.
Let $S^\prime$ be $S\cup G_2$
Both the contraction of $S^\prime$ in $G$, which is equal to the contraction of $S$ in $G_1$, and $G_{\mid S^\prime}$ are $2$-connected.
Hence, $S^\prime$ forms a good flat in $G$ and we get
\begin{align*}
    w(E(S))+1 &= w(E(S^\prime)) -w(E(G_2))+\delta +1= \\
    &=\delta(\abs{S^\prime}-1 -\abs{V(G_2)}+1+1)-1+1 = \delta(\abs{S}-1).
\end{align*}
Hence, $G$ satisfies $(\spadesuit)_\delta$, which concludes the proof.
\end{proof}

\begin{ex}
This example will show that it is generally not true that $2$-connected graphs $G_1$ and $G_2$ satisfy $(\spadesuit)_\delta$ if and only if their $\delta$-gluing $G$ does.

Let $\delta=4$.
Let further $G_1$ and $G_2$ be multigraphs, where $G_1$ is the $4$-cycle with a single multi-edge $F_1=\set{a_{1},b_{1},c_1,d_1}$, and $G_2$ is two $4$-cycles which are glued along an edge $e_2$.
One can check that neither $G_1$ nor $G_2$ satisfies $(\spadesuit)_4$.
However, their $4$-gluing along $F_1$ and $F_2={e_2}$ does.
\end{ex}

Now we will look at two important special cases of Construction \ref{universalCon}.

\begin{con}\label{pathCon}
Let $G_1$ and $G_2$ be $2$-connected multigraphs. Let further $e_1,$ and $e_2$ be edges from the corresponding multigraphs with $w(e_1)=1$, $w(e_2)=\delta-1$. Then the multigraph $G$ is constructed by gluing $G_1$ and $G_2$ along $e_1$ and $e_2$ and deleting the glued edge.
\end{con}

Notice that choosing $G_2$ to be the cycle $C_\delta$ will result in dividing the gluing edge of $G_1$ into $\delta-1$ edges.
This brings us to the following important example, which will show us how from a Gorenstein multigraph, a Gorenstein simple graph can be constructed.

\begin{ex}\label{simplificationEx}
Let $G$ be a multigraph satisfying $(\spadesuit)_\delta$. Let $e_1,\ldots,e_k$ be all the edges in $G$ which have an edge parallel to them.
For every $e_i$, $w(e_i)=1$, thus we can use Construction \ref{pathCon} with $G_2=C_\delta$ for every $e_i$.
The resulting graph will then have no parallel edges left.
\end{ex}

\begin{con}\label{deltaCon}
Let $G_1$ and $G_2$ be $2$-connected multigraphs. Let further $e_1$ and $e_2$ be edges from the corresponding graphs with $w(e_i)=\delta -1$. Then the multigraph $G$ is constructed by gluing $G_1$ and $G_2$ along $e_1$ and $e_2$ and substituting the gluing edge with $\delta-2$ parallel edges. For each of those new edges $f_i$, we have $w(f_i)=1$. We also note that $G_1$ may also be reconstructed from $G$ and $G_2$.
\end{con}

It is not difficult to check that Constructions \ref{pathCon} and \ref{deltaCon} are indeed special cases of Construction \ref{universalCon}. 
The last construction we need allows us to build Gorenstein multigraphs from suitable Gorenstein simple graphs.

\begin{con}\label{reverseExampleCon}
Let $G$ be a $2$-connected multigraph and let $v_1$ and $v_2$ be two vertices of $G$. If there exists a path consisting of $\delta-1$ edges between $v_1$ and $v_2$ whose interior vertices all have degree $2$, then the multigraph $G^\prime$ is constructed by substituting the path by a single edge. 
\end{con}

This construction can be thought of as the reversal of Construction \ref{pathCon} with a multigraph $G$ and a cycle $C_\delta$. Hence, if $G$ is Gorenstein, $G^\prime$ will be too due to Proposition \ref{universalConProp}.

Now we will show that Constructions \ref{pathCon}, \ref{deltaCon}, and \ref{reverseExampleCon} are sufficient to build every Gorenstein multigraph from simple building blocks. We will distinguish two cases.

\subsection{Case 1: $\delta > 2$}

Let a positive integer $\delta > 2$ be fixed. In order to describe multigraphs satisfying $(\spadesuit)_\delta$, we can use three results from \cite{Hibi}.

\begin{prop}[Proposition 4.1 \cite{Hibi}]\label{ConstrProp41}
Suppose $G_1,\ldots,G_{\delta-1}$ are $2$-connected graphs satisfying $(\spadesuit)_\delta$. Let $e_1,\ldots, e_{\delta-1}$ be edges from the corresponding graphs with weights equal to $\delta-1$. Then the gluing of $G_1,\ldots,G_{\delta-1}$ along the edges $e_1,\ldots,e_{\delta-1}$ that is the graph $G$ which is a disjoint union of $G_1,\ldots,G_{\delta-1}$ with edges $e_1,\ldots,e_{\delta-1}$ unified to a single edge satisfies equalities $(\spadesuit)_\delta$ (and the weight of $e$ is $1$). 
\end{prop}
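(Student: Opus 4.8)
The plan is to realise $G$ as an \emph{iterated $\delta$-gluing} in the sense of Construction \ref{universalCon}, and then to invoke Proposition \ref{universalConProp} repeatedly, so that the whole good-flat analysis is outsourced to that proposition. If $\delta = 2$ there is nothing to prove, since then $\delta - 1 = 1$ and $G = G_1$; so assume $\delta \ge 3$.

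Write $u$ and $v$ for the two endpoints of the unified edge $e$. Define multigraphs $H_1, \dots, H_{\delta - 1}$ by $H_1 = G_1$ and, for $2 \le k \le \delta - 1$, taking $H_k$ to be the $\delta$-gluing of $H_{k-1}$ and $G_k$ along the set $F^{(k-1)}$ of all edges of $H_{k-1}$ joining $u$ and $v$ and along $\{e_k\}$. A $\delta$-gluing replaces the two glued parallel classes by $w(F^{(k-1)}) + w(e_k) - \delta = w(F^{(k-1)}) - 1$ parallel edges, each of weight $1$; since $w(F^{(1)}) = w(e_1) = \delta - 1$, induction gives $\abs{F^{(k)}} = w(F^{(k)}) = \delta - k$ for every $k \ge 2$. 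In particular $F^{(k)} \ne \emptyset$ for all $k \le \delta - 1$, so every gluing step is legitimate, and $H_{\delta - 1}$ has exactly one edge $e$ between $u$ and $v$, which has weight $1$. Unwinding the definition, $H_{\delta - 1}$ is nothing but the disjoint union of $G_1, \dots, G_{\delta - 1}$ with $e_1, \dots, e_{\delta - 1}$ unified to the single edge $e$; that is, $H_{\delta - 1} = G$ and $w(e) = 1$.

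It then remains to run the induction. First I would check that every $H_k$ is $2$-connected, which is what makes Proposition \ref{universalConProp} applicable: as $H_{k-1}$ and $G_k$ are $2$-connected and at least one edge survives between the glued vertices $u$ and $v$, removing $u$ (resp.\ $v$) leaves each of the two pieces connected and sharing the vertex $v$ (resp.\ $u$), while removing any interior vertex of one piece keeps that piece's fragments attached to the other piece through $u$ and $v$; hence $H_k$ is $2$-connected. Now, by the implication ``$G$ satisfies $(\spadesuit)_\delta$ if $G_1$ and $G_2$ do'' of Proposition \ref{universalConProp} applied to $H_{k-1}$, $G_k$, $H_k$, an induction on $k$ starting from $H_1 = G_1$ (which satisfies $(\spadesuit)_\delta$ by hypothesis) shows that every $H_k$, and in particular $G = H_{\delta - 1}$, satisfies $(\spadesuit)_\delta$, with $w(e) = 1$ as already observed.

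Essentially all the substantive work is pushed into Proposition \ref{universalConProp}; the rest is routine. The only points that need care — and the closest thing to an obstacle — are the bookkeeping of the parallel-edge count along the iteration (so that no step produces $0$ edges between $u$ and $v$ and thereby breaks $2$-connectedness) and the claim $w(e) = 1$. The latter can also be seen directly: $G / e$ identifies $u$ with $v$, and that common vertex then separates the $\delta - 1 \ge 2$ pieces $G_i$ from one another, so $G / e$ is not $2$-connected; hence $w(e) \ne \delta - 1$, i.e.\ $G \setminus e$ is $2$-connected and $w(e) = 1$, which is in any case forced by the first equality of $(\spadesuit)_\delta$. One could instead bypass the iteration and argue directly as in the proof of Proposition \ref{universalConProp}, splitting good flats $S$ of $G$ according to whether or not $S$ contains both $u$ and $v$.
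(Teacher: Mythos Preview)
The paper does not give its own proof of this proposition; it is simply quoted from \cite{Hibi} without argument. Your proof is correct, and the iterated $\delta$-gluing you set up is exactly the decomposition the paper itself uses in the proof of Theorem \ref{mainThm1} (there for the related purpose of showing that the construction of Proposition \ref{ConstrProp41} can be realised via Constructions \ref{deltaCon} and \ref{pathCon}): the step $H_1\to H_2$ is Construction \ref{deltaCon}, and each subsequent $H_{k-1}\to H_k$ is Construction \ref{pathCon}, with the parallel-edge count dropping by one at every stage until a single edge $e$ of weight $1$ remains. So your argument is fully in line with the paper's approach; it just packages as a standalone proof what the paper leaves implicit by citing \cite{Hibi} and then reusing the same decomposition later.
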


\begin{prop}[Proposition 4.2 \cite{Hibi}]\label{ConstrProp42}
Suppose $G$ is a $2$-connected graph satisfying equalities $(\spadesuit)_\delta$. Let $e$ be an edge with weight equal to $1$. Then, the $(\delta-1)$-subdivision of $e$, that is the graph $G^\prime$ equal to $G$ with $e$ replaced by a path $e_1,\ldots,e_{\delta-1}$, satisfies $(\spadesuit)_\delta$.
\end{prop}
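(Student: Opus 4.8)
The plan is to recognise $G'$ as an output of Construction \ref{pathCon} and then invoke Proposition \ref{universalConProp}, rather than to argue from scratch. Take $G_1=G$ with $e_1=e$, so that $w(e_1)=1$ by hypothesis, and take $G_2=C_\delta$ the $\delta$-cycle, with $e_2$ an arbitrary edge of it. Since $\delta>2$, the graph $C_\delta\setminus e_2$ is a path on $\delta$ vertices and hence not $2$-connected, while $C_\delta/e_2\cong C_{\delta-1}$ is $2$-connected; therefore $w(e_2)=\delta-1$, so Construction \ref{pathCon} applies. By the remark following that construction (choosing $G_2=C_\delta$ subdivides the gluing edge into $\delta-1$ edges), its output is exactly $G$ with $e$ removed and the path $C_\delta\setminus e_2$ of length $\delta-1$ inserted between the endpoints of $e$, i.e. the $(\delta-1)$-subdivision $G'$.

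It then suffices to check that $C_\delta$ satisfies $(\spadesuit)_\delta$: since Construction \ref{pathCon} is a special case of Construction \ref{universalCon}, Proposition \ref{universalConProp} will give that $G'$ satisfies $(\spadesuit)_\delta$ once $G$ and $C_\delta$ do. Equality $(1)$ for $C_\delta$ is immediate because every one of its $\delta$ edges has weight $\delta-1$ and $\abs{V(C_\delta)}-1=\delta-1$. For $(2)$, deleting any nonempty proper set of vertices from a cycle leaves a disjoint union of paths, so the only candidates for good flats are the two-element sets $S$ of adjacent vertices; for such $S$ one has $C_\delta/E(S)\cong C_{\delta-1}$, which is $2$-connected, and $w(E(S))=\delta-1$, whence $w(E(S))+1=\delta=\delta(\abs{S}-1)$. (If a single edge is not regarded as $2$-connected, then $C_\delta$ has no good flats at all and $(2)$ is vacuous.) This finishes the short argument; on this route there is essentially no obstacle, everything being a routine verification.

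For completeness I would also sketch the direct computation, which has the merit of exhibiting the new facets of $B(M(G'))$. One first fixes the weights in $G'$: deleting any new path edge leaves a leaf, so no such edge has weight $1$; contracting it gives the $(\delta-2)$-subdivision of $e$, which is $2$-connected, so each path edge has weight $\delta-1$. Old edges keep their weights, since subdividing an edge and suppressing a degree-$2$ vertex both preserve $2$-connectivity. Equality $(1)$ is then a one-line count: $w(E(G'))=w(E(G))-1+(\delta-1)^2=\delta(\abs{V(G)}-1)+\delta(\delta-2)=\delta(\abs{V(G')}-1)$. For $(2)$ one classifies the good flats $S'$ of $G'$: if $S'$ meets the interior of the inserted path, then — each interior vertex having degree exactly $2$ in $G'$ — $2$-connectivity of $G'_{\mid S'}$ forces the whole path into $S'$, and then $S$ obtained from $S'$ by removing the path interior is a good flat of $G$ with $G'_{\mid S'}$ the $(\delta-1)$-subdivision of $G_{\mid S}$ and $G'/E'(S')\cong G/E(S)$, so the identity for $S'$ reduces to the one for $S$; if $S'$ misses the path interior then $S'\subseteq V(G)$, and either $S'$ is literally a good flat of $G$ (when it omits an endpoint of $e$) or $S'$ contains both endpoints of $e$, in which case contracting $E'(S')$ attaches the path as a handle at a single vertex, forcing it to be a cut vertex unless $S'=V(G)$.

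The main obstacle in the direct approach is precisely this last sub-case: one must notice that $V(G)$ itself becomes a new good flat of $G'$ (its induced subgraph is $G\setminus e$, its contraction is the cycle $C_{\delta-1}$) and that the equation it has to satisfy is exactly equality $(1)$ of $(\spadesuit)_\delta$ for $G$, not an instance of $(2)$; once this is recognised the remaining reductions are bookkeeping. In the slick argument this subtlety is precisely what Proposition \ref{universalConProp} absorbs, which is why that route is the one I would actually write down.
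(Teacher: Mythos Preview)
Your proposal is correct. The paper itself does not prove this proposition at all: it is quoted from \cite{Hibi} as Proposition~4.2 there and used as a black box in the proof of Theorem~\ref{mainThm1}. Your primary argument, however, is precisely the one the paper's own machinery suggests --- the remark right after Construction~\ref{pathCon} notes that choosing $G_2=C_\delta$ produces the $(\delta-1)$-subdivision, and in the proof of Theorem~\ref{mainThm1} the paper takes for granted that $C_\delta$ satisfies $(\spadesuit)_\delta$; combining these with Proposition~\ref{universalConProp} is exactly your route. So while there is no proof in the paper to compare against literally, your slick argument is the one implicit in the paper's setup, and your direct sketch is a sound independent verification.
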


\begin{thm}[Theorem 4.3 \cite{Hibi}]\label{ConstrThm43}
Let $G$ be a $2$-connected graph. The following conditions are equivalent:
\begin{itemize}
    \item[(1)] $G$ satisfies $(\spadesuit)_\delta$
    \item[(2)] $G$ can be obtained using constructions described in Propositions 4.1 and 4.2 \cite{Hibi} from a $\delta$-cycle.
\end{itemize}
\end{thm}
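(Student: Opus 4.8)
The plan is to prove the two implications separately, with $(2)\Rightarrow(1)$ a short induction and $(1)\Rightarrow(2)$ the real content. For $(2)\Rightarrow(1)$ I would first verify directly that the $\delta$-cycle satisfies $(\spadesuit)_\delta$: each of its $\delta$ edges $e$ has $C_\delta\setminus e$ a path (hence not $2$-connected) and $C_\delta/e=C_{\delta-1}$ $2$-connected, so $w(e)=\delta-1$; the first equality then reads $\delta(\delta-1)=\delta(\abs{V}-1)$, and $C_\delta$ has no proper good flat so the second is vacuous. Propositions~\ref{ConstrProp41} and \ref{ConstrProp42} say exactly that the two admissible operations preserve $(\spadesuit)_\delta$, so an induction on the number of operations used to build $G$ closes this direction.

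For $(1)\Rightarrow(2)$ I would use strong induction on $\abs{E(G)}$. A short computation with the first equality of $(\spadesuit)_\delta$ shows $C_\delta$ is the unique cycle satisfying it; this is the base case. Assume $G$ satisfies $(\spadesuit)_\delta$ and is not a cycle, and split according to whether $G$ has an edge of weight $1$. Suppose first that it does, say $w(uv)=1$. Since in a $2$-connected graph $G/e$ is $2$-connected exactly when the ends of $e$ do not form a $2$-cut, well-definedness of $w$ forces $\{u,v\}$ to be a $2$-cut; let $P_1,\dots,P_m$ be the components of $G-\{u,v\}$ and $G_j:=G[P_j\cup\{u,v\}]$, each of which — because $uv$ is an edge — is $2$-connected with $uv$ a weight-$(\delta-1)$ edge. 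Each set $V(G)\setminus P_j$ is a good flat of $G$; substituting it into $(\spadesuit)_\delta$ produces a linear identity relating $\abs{P_j}$ to the total weight of the edges lying inside $G_j$, and summing these identities over $j$ and comparing with the first equality of $(\spadesuit)_\delta$ forces $m=\delta-1$. Hence $G$ is the gluing of $G_1,\dots,G_{\delta-1}$ along weight-$(\delta-1)$ edges in the sense of Proposition~\ref{ConstrProp41}. It remains to see that each $G_j$ itself satisfies $(\spadesuit)_\delta$, and for this I would build a dictionary on good flats: a good flat of $G_j$ avoiding $u$ or $v$ is a good flat of $G$, and one containing both $u$ and $v$ extends to a good flat of $G$ by adjoining the other pieces; transporting the corresponding equalities of $(\spadesuit)_\delta$ for $G$ back along this dictionary — carefully accounting for the fact that $uv$ has weight $1$ in $G$ but weight $\delta-1$ in each $G_j$ — yields the equalities for $G_j$. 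Since each $G_j$ has strictly fewer edges, induction expresses it in terms of $C_\delta$, and Proposition~\ref{ConstrProp41} rebuilds $G$.

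Now suppose $G$ has no edge of weight $1$, i.e., every edge has weight $\delta-1$. Then $(\delta-1)\abs{E}=\delta(\abs{V}-1)$, which together with $2\abs{E}\ge 3\abs{V}$ is impossible for $\delta\ge 3$; so $G$ has a vertex of degree $2$, and since $G$ is not a cycle there is a maximal path $P=x_0x_1\cdots x_\ell$ whose interior vertices have degree $2$ and whose ends $x_0,x_\ell$ have degree $\ge 3$. Every edge of $P$ meets a degree-$2$ vertex, so has weight $\delta-1$; moreover $S:=V(G)\setminus\{x_1,\dots,x_{\ell-1}\}$ is a good flat of $G$, its restriction being $G$ with the handle $P$ deleted — still $2$-connected by an ear-decomposition argument using $\deg x_0,\deg x_\ell\ge 3$ — and its contraction turning $P$ into a cycle. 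Substituting $S$ into the second equality of $(\spadesuit)_\delta$ and using the first equality together with $w(E(P))=\ell(\delta-1)$ collapses to $\ell=\delta-1$, so $P$ is a $(\delta-1)$-subdivision of an edge. Undoing it produces a $2$-connected graph $G'$ with fewer edges in which the new edge $x_0x_\ell$ has weight $1$ (no edge $x_0x_\ell$ can have been present, since it would itself have weight $1$, against our assumption), and the same flat bookkeeping shows $G'$ still satisfies $(\spadesuit)_\delta$. By induction $G'$ is built from $C_\delta$, and Proposition~\ref{ConstrProp42} recovers $G$.

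The main obstacle is the good-flat dictionary in the weight-$1$-edge case: proving that the pieces $G_j$ really inherit $(\spadesuit)_\delta$ — and in particular that each has a well-defined weight function, which amounts to checking that no $G_j\setminus uv$ is $2$-connected — requires a careful matching of good flats and weights across the gluing, the one genuinely delicate point being the change of weight of the glued edge. Once this dictionary is in place the count $m=\delta-1$ and the whole induction fall out. The second case is of the same flavour but lighter, with the only subtle step being the $2$-connectedness of the restriction $G[S]$.
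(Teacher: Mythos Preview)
This theorem is not proved in the present paper --- it is quoted as Theorem~4.3 of \cite{Hibi} and used as a black box in the proof of Theorem~\ref{mainThm1} --- so there is no proof here to compare against; what follows assesses your outline on its own terms.

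The direction $(2)\Rightarrow(1)$ and the overall architecture of $(1)\Rightarrow(2)$ are sound, and in Case~1 the count $m=\delta-1$ falls out exactly as you describe; the good-flat dictionary you flag is indeed the crux of that case, and your sketch of it is plausible (the point you single out, that $G_j\setminus uv$ is never $2$-connected, does need a real argument). Case~2, however, has a genuine gap. You assert that for a maximal degree-$2$ ear $P=x_0\cdots x_\ell$ the restriction $G[S]$, with $S=V(G)\setminus\{x_1,\dots,x_{\ell-1}\}$, is $2$-connected ``by an ear-decomposition argument using $\deg x_0,\deg x_\ell\ge 3$''. Ear-decomposition theory only guarantees that \emph{some} ear --- the last one in a chosen decomposition --- can be deleted while keeping $2$-connectedness, not an arbitrary maximal degree-$2$ ear. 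Concretely, the minimally $2$-connected graph on $\{x_0,x_\ell,p,a,b,c,d,w\}$ with edge set $\{x_0p,\,px_\ell,\,x_0a,\,aw,\,x_0c,\,cw,\,x_\ell b,\,bw,\,x_\ell d,\,dw\}$ has $P=x_0\,p\,x_\ell$ as a maximal degree-$2$ ear with endpoints of degree~$3$, yet $G-p$ has $w$ as a cut vertex. This particular graph does not satisfy $(\spadesuit)_\delta$ for any integer $\delta$, so the theorem survives; but your justification at this step makes no use of $(\spadesuit)_\delta$ and therefore cannot stand as written. To repair Case~2 you must either choose $P$ more carefully (as a removable ear, and then argue separately that \emph{that} ear has length $\delta-1$), or bring $(\spadesuit)_\delta$ or $(\heartsuit)_\delta$ to bear directly on the ear lengths without first claiming that $S$ is a good flat.
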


Generalising Theorem \ref{ConstrThm43} gives us the main theorem of this section.

\begin{thm}\label{mainThm1}
Let $G$ be a $2$-connected multigraph. Then the following are equivalent:
\begin{itemize}
\item[(1)] $G$ satisfies $(\spadesuit)_\delta$.
\item[(2)] $G$ can be obtained by the Constructions \ref{pathCon}, \ref{deltaCon}, and \ref{reverseExampleCon}, from a $\delta$-cycle.
\end{itemize}
\end{thm}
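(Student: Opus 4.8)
The plan is to reduce the multigraph statement to the simple-graph statement of Theorem~\ref{ConstrThm43} and then bridge between the two using Construction~\ref{reverseExampleCon} and Example~\ref{simplificationEx}. The direction $(2)\Rightarrow(1)$ is the easy half: each of Constructions~\ref{pathCon}, \ref{deltaCon} is a special case of the $\delta$-gluing of Construction~\ref{universalCon}, so Proposition~\ref{universalConProp} tells us that applying them to multigraphs satisfying $(\spadesuit)_\delta$ produces a multigraph satisfying $(\spadesuit)_\delta$; Construction~\ref{reverseExampleCon} is the reverse of applying Construction~\ref{pathCon} with a $\delta$-cycle, so it also preserves $(\spadesuit)_\delta$ by the ``only if'' clause of Proposition~\ref{universalConProp}. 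Since a $\delta$-cycle satisfies $(\spadesuit)_\delta$ (every edge has weight $1$ and the equalities are immediate), any multigraph built from it by these constructions satisfies $(\spadesuit)_\delta$.

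For $(1)\Rightarrow(2)$, let $G$ be a $2$-connected multigraph satisfying $(\spadesuit)_\delta$. First I would apply Construction~\ref{reverseExampleCon} repeatedly to contract every maximal path of degree-$2$ interior vertices of length $\delta-1$ — but more to the point, I want to pass to the \emph{simplification} of $G$. The idea is the reverse of Example~\ref{simplificationEx}: in that example, from a Gorenstein multigraph one builds a simple graph by replacing each bundle of parallel edges appropriately; here I run that in reverse. Concretely, list the classes of parallel edges of $G$; each such edge has weight $1$ (a parallel edge is never in a bridge-like position, so $G\setminus e$ stays $2$-connected). Collapsing each parallel class down and keeping track of the combinatorics, one shows the resulting simple graph $\tilde G$ satisfies $(\spadesuit)_\delta$, and conversely $G$ is recovered from $\tilde G$ by repeatedly applying Construction~\ref{pathCon} with $G_2 = C_\delta$ (which replaces a weight-$1$ edge by a path of $\delta-1$ edges — or, chaining several such, by the appropriate bundle) together with Construction~\ref{reverseExampleCon} to fine-tune. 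By Theorem~\ref{ConstrThm43}, $\tilde G$ is obtained from a $\delta$-cycle using Propositions~\ref{ConstrProp41} and~\ref{ConstrProp42}. Finally I would observe that Proposition~\ref{ConstrProp42} (subdivision of a weight-$1$ edge into a $(\delta-1)$-path) is literally Construction~\ref{pathCon} with $G_2=C_\delta$, and that Proposition~\ref{ConstrProp41} ($\delta$-ary gluing along weight-$(\delta-1)$ edges) can be decomposed into $\delta-2$ successive binary gluings, i.e.\ iterated applications of Construction~\ref{deltaCon} — because gluing $\delta-1$ graphs along a common edge is the same as gluing two at a time, with the intermediate ``stub'' edge bundle shrinking by one each time, which is exactly what Construction~\ref{deltaCon} does. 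Composing all these, $G$ is obtained from a $\delta$-cycle by Constructions~\ref{pathCon}, \ref{deltaCon}, \ref{reverseExampleCon}.

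The main obstacle I expect is making the reduction from $G$ to its simplification $\tilde G$ precise and bidirectional: I need to check that (a) the weight function on $\tilde G$ is the one dictated by Theorem~\ref{weightFcnThm}, i.e.\ that deleting/contracting behaves correctly under merging parallel classes, (b) the good flats of $\tilde G$ correspond to good flats of $G$ so that equalities $(\spadesuit)_\delta$ transfer, and (c) every parallel class of $G$ has size of the form $1 + m(\delta-2)$ or otherwise decomposes into pieces each realizable by Construction~\ref{pathCon}/\ref{deltaCon} — in other words that the arithmetic of bundle sizes is compatible with what these two constructions can produce. Point (c) is where the hypothesis $\delta>2$ is used (for $\delta=2$ a different argument, handled in the companion Theorem~\ref{mainThm2}, is needed since $C_\delta$ degenerates), and it is the step that requires the most care; the rest is bookkeeping on top of Theorem~\ref{ConstrThm43} and Proposition~\ref{universalConProp}.
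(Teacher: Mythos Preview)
Your overall architecture---reduce to the simple case, invoke Theorem~\ref{ConstrThm43}, then translate the constructions of Propositions~\ref{ConstrProp41} and~\ref{ConstrProp42} into Constructions~\ref{pathCon} and~\ref{deltaCon}---is exactly the paper's strategy. But the passage to a simple graph is done in the wrong direction, and this is the source of all of your obstacles (a)--(c). You try to \emph{collapse} parallel classes of $G$ down to single edges to obtain a simple $\tilde G$, and then worry about whether the weights, good flats, and bundle sizes survive. The paper instead uses Example~\ref{simplificationEx} \emph{forward}: starting from $G$, it \emph{subdivides} every parallel edge into a path of $\delta-1$ edges (this is literally Construction~\ref{pathCon} with $G_2=C_\delta$ applied to each weight-$1$ parallel edge). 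The resulting graph $G'$ is simple, and since each step is a special case of Construction~\ref{universalCon}, Proposition~\ref{universalConProp} guarantees $G'$ satisfies $(\spadesuit)_\delta$ with no extra checks. The reverse passage $G'\rightsquigarrow G$ is then exactly Construction~\ref{reverseExampleCon}. With this orientation, your points (a), (b), (c) simply do not arise: there is no arithmetic constraint on bundle sizes because you never merge edges, you only split them.

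There is also a genuine error in your decomposition of Proposition~\ref{ConstrProp41}. You claim it can be realised as $\delta-2$ iterated applications of Construction~\ref{deltaCon}. But Construction~\ref{deltaCon} requires \emph{both} input edges to have weight $\delta-1$; after one application, the junction carries $\delta-2$ parallel edges each of weight~$1$, so a second application of Construction~\ref{deltaCon} at that location is impossible. The correct decomposition (and the one the paper gives) is: apply Construction~\ref{deltaCon} \emph{once} to $G_1$ and $G_2$, producing $\delta-2$ weight-$1$ parallel edges, and then apply Construction~\ref{pathCon} successively to attach each of $G_3,\dots,G_{\delta-1}$ (whose distinguished edges have weight $\delta-1$) along one of those parallel edges. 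This consumes $\delta-3$ of the $\delta-2$ parallel edges and leaves exactly one, matching the output of Proposition~\ref{ConstrProp41}.
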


\begin{proof}
(2) $\Rightarrow$ (1) is true because $\delta$-cycles satisfy $(\spadesuit)_\delta$ by Proposition \ref{universalConProp}.

For (1) $\Rightarrow$ (2), we will use Example \ref{simplificationEx} and the results from \cite{Hibi}.

By Example \ref{simplificationEx}, $G$ can be turned into a simple graph $G^\prime$ which also satisfies $(\spadesuit)_\delta$.
Since this operation can be reversed using Construction \ref{reverseExampleCon}, it remains to show that if $G^\prime$ can be obtained by the constructions described in Propositions \ref{ConstrProp41} and \ref{ConstrProp42}, it can equally be obtained by Constructions \ref{pathCon} and \ref{deltaCon}.

We will prove this by induction on the number of construction steps.

Let $G^\prime$ be constructible in $n$ steps and let us assume that the condition holds for every graph constructible in $n-1$ steps.
If the $n$th step is an application of Proposition \ref{ConstrProp42}, we may regard this step as an application of Construction \ref{pathCon} and we are done.

If the $n$th step is an application of Proposition \ref{ConstrProp41}, we have Gorenstein graphs $G_1,\ldots,G_{\delta-1}$, with edges $e_1,\ldots,e_{\delta-1}$ with $w(e_1)=\ldots=w(e_{\delta-1})=\delta-1$, which are being glued together along $e_1,\ldots,e_{\delta-1}$.
Let $G_{1,2}$ be the result of applying Construction \ref{deltaCon} to $G_1$ with $e_1$ and $G_2$ with $e_2$.
This yields $\delta-2$ new parallel edges, each having weight $1$.
Thus, we can successively use Construction \ref{pathCon} to glue the remaining $G_i$ along their respective $e_i$ to the parallel edges.
This leaves precisely one of the parallel edges unused and we are done.
\end{proof}

\begin{ex}[Example \ref{GorensteinEx} continued]
Let $G$ be the loop-free multigraph with two vertices and $n$ edges and let $n>1$. We can construct it by taking two $n$-cycles and gluing them together by Construction \ref{deltaCon}. This will result in $n-2$ parallel edges and two paths of $n-1$ edges between two vertices. By Construction \ref{pathCon}, the paths can be transformed into two additional parallel edges, yielding $G$.
\end{ex}

\subsection{Case 2: $\delta = 2$}

In this case we notice that we can reformulate $(\spadesuit)_2$ and $(\heartsuit)_2$.

\begin{prop}
Let $G$ be a $2$-connected multigraph. Then the following are equivalent:
\begin{itemize}
    \item $(\spadesuit)_2$ is satisfied. For the special case, it is given by
    \begin{align*}
        \abs{E} = 2(\abs{V}-1)
    \end{align*}
    and
    \begin{align*} 
        \abs{E(S)} = 2(\abs{S}-1)\text{ , for every good flat $S$}
    \end{align*}
    \item $(\heartsuit)_2$ is satisfied. For the special case, it is given by
    \begin{align*}
        \abs{E(S)} + k(S) = 2(\abs{S}-1)\text{ , for every $2$-connected set $S$}
    \end{align*}
\end{itemize}
\end{prop}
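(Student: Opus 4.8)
The plan is to derive the equivalence directly from Theorem \ref{weightFcnThm} by specialising to the case $\delta = 2$. The observation that makes everything collapse is that for $\delta = 2$ the codomain $\set{1,\delta-1}$ of the weight function appearing in parts $(ii)$ and $(iii)$ of that theorem is the single point $\set{1}$. Hence there is only one candidate weight function, $w\equiv 1$, and on a $2$-connected multigraph it is genuinely well defined: for every edge $e$ at least one of ``$G\setminus e$ is $2$-connected'' and ``$G/e$ is $2$-connected'' holds — this is the connectivity dichotomy for the graphic matroid $M(G)$, which is connected because $G$ is $2$-connected — and both alternatives assign $e$ the value $1$, so there is nothing to choose. (One could also bypass this remark: were no admissible weight function to exist, then conditions $(i)$, $(ii)$, $(iii)$ of Theorem \ref{weightFcnThm}, and with them both $(\spadesuit)_2$ and $(\heartsuit)_2$, would all fail, and the claimed equivalence would hold vacuously.)

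Given $w\equiv 1$, the argument has just two parts. First, substituting $w(e)=1$ for all $e\in E$ into the equalities $(\spadesuit)_2$ produces the two displayed identities — the one for $\abs{E}$ and the one over good flats — and substituting into $(\heartsuit)_2$ produces the displayed identity $\abs{E(S)}+k(S)=2(\abs{S}-1)$ over all $2$-connected sets $S$; here one uses that a good flat $S$ has $G/G_{\mid S}$ $2$-connected, so $k(S)=1$ for such $S$, which is precisely why on good flats the $(\heartsuit)_2$ equality matches condition $(2)$ of $(\spadesuit)_2$. Second, the equivalence $(\spadesuit)_2\Leftrightarrow(\heartsuit)_2$ itself is nothing more than Theorem \ref{weightFcnThm}$(ii)\Leftrightarrow(iii)$ read with $\delta=2$: since the candidate weight function is unique, ``$G$ possesses a weight function satisfying $(\spadesuit)_2$'' is literally ``$(\spadesuit)_2$ holds'', and symmetrically for $(\heartsuit)_2$, so the equivalence of the two bracketed conditions of Theorem \ref{weightFcnThm} is exactly the equivalence we want.

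I expect no genuine obstacle; the only care required is bookkeeping — checking that $w\equiv 1$ really is the unique weight function in the $\delta=2$ case, and that the arithmetic of the substitution (notably $k(S)=1$ on good flats) is carried out so that the two displayed forms are mutually consistent there. All of the substantive combinatorics — moving from good flats to arbitrary $2$-connected sets by decomposing the contraction $G/G_{\mid S}$ into its $2$-connected components — has already been done inside Theorem \ref{weightFcnThm} (via Theorem~3.4 of \cite{Hibi}) and does not need to be repeated.
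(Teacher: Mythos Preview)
Your proposal is correct and follows essentially the same approach as the paper: both arguments rest on the single observation that for $\delta=2$ the codomain $\set{1,\delta-1}$ collapses to $\set{1}$, so $w\equiv 1$ and $w(F)=\abs{F}$ for every edge set $F$, whence the displayed formulas are just the general $(\spadesuit)_\delta$ and $(\heartsuit)_\delta$ with this substitution, and their equivalence is Theorem~\ref{weightFcnThm}$(ii)\Leftrightarrow(iii)$. Your write-up is more explicit than the paper's one-line proof (you spell out well-definedness of $w$ and the $k(S)=1$ consistency on good flats), but there is no difference in method.
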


\begin{proof}
Since the weight function can only take on the values $1$ and $\delta-1 = 2-1 = 1$, the expression $w(F)$ for a set $F$ of edges only counts the number of edges in $F$.
\end{proof}

If Theorem \ref{mainThm1} would hold for $\delta=2$, the class of multigraphs satisfying $(\spadesuit)_2$ would only consist of one element, namely the cycle $C_2$.
That is because $C_2$ acts like a neutral element under $2$-gluing.
As we are going to see now, the class is substantially bigger.

\begin{thm}\label{mainThm2}
Let $G$ be a $2$-connected multigraph. Then the following are equivalent:
\begin{itemize}
    \item $G$ satisfies $(\spadesuit)_2$,
    \item either $G$ can be obtained with Construction \ref{pathCon} from the clique $K_4$ or $G$ is the $2$-cycle $C_2$.
\end{itemize}
\end{thm}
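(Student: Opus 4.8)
The strategy mirrors the proof of Theorem \ref{mainThm1}: reduce to the already-classified simple-graph case of \cite{Hibi} via Example \ref{simplificationEx}, but now the relevant classification is the $\delta=2$ case of Theorem 4.3 of \cite{Hibi} (or its analogue). First I would settle the easy direction: both $C_2$ and any graph obtained from $K_4$ by Construction \ref{pathCon} satisfy $(\spadesuit)_2$. For $C_2$ this is the content of Example \ref{GorensteinEx} continued (every edge has weight $1$ and $\abs{E}=2=2(\abs{V}-1)$, with no good flats). For $K_4$ one checks directly that $\abs{E}=6=2(4-1)$ and that the only candidate good flats are $3$-subsets of vertices inducing a triangle, but contracting such a triangle in $K_4$ leaves a multigraph on two vertices which is $2$-connected, while the restriction is a triangle which is $2$-connected — wait, one must check whether $3$-cycles are actually good flats in $K_4$; in any case the equality $\abs{E(S)}=2(\abs{S}-1)$ either holds or $S$ fails to be a good flat. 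Then Construction \ref{pathCon} preserves $(\spadesuit)_2$ by Proposition \ref{universalConProp} (it is a special case of Construction \ref{universalCon}).

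For the hard direction, suppose $G$ is a $2$-connected multigraph satisfying $(\spadesuit)_2$. By Example \ref{simplificationEx}, applying Construction \ref{pathCon} with $G_2=C_2$ to every edge with a parallel partner turns $G$ into a simple graph $G'$ still satisfying $(\spadesuit)_2$; note that a $2$-subdivision of an edge is the identity operation when $\delta=2$, so actually the only thing Construction \ref{pathCon} does here with $G_2 = C_2$ is to \emph{absorb} one copy of a multi-edge, i.e. delete a parallel edge. So $G'$ is just $G$ with all multi-edges collapsed to single edges, and it is simple and $2$-connected. Now I invoke the $\delta=2$ classification of simple Gorenstein graphs. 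Here I expect the key input: the $\delta=2$ analogue of Theorem \ref{ConstrThm43} must say that a $2$-connected simple graph satisfies $(\spadesuit)_2$ if and only if it can be built from $K_4$ (the $\delta=2$ "seed"; note $C_2$ is not simple and the $2$-cycle degenerates, so $K_4$ replaces the $\delta$-cycle) by the $\delta=2$ versions of Propositions \ref{ConstrProp41} and \ref{ConstrProp42}. When $\delta=2$, Proposition \ref{ConstrProp42} (a $(\delta-1)$-subdivision, i.e. a $1$-subdivision) is the identity, and Proposition \ref{ConstrProp41} is a gluing of $\delta-1=1$ graph along one edge — also the identity. So the simple-graph classification collapses to: the \emph{only} $2$-connected simple graph satisfying $(\spadesuit)_2$ is $K_4$ itself. (One should double-check this against \cite{Hibi}; the combinatorial constraint $\abs{E}=2(\abs{V}-1)$ together with the good-flat equalities is very rigid, and $K_4$ is the minimal $3$-regular graph, which matches the Gorenstein-Fano intuition.)

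Granting that $G'=K_4$, it remains to reverse the simplification: $G$ is obtained from $K_4$ by repeatedly introducing parallel edges. Each such step is the reverse of the Construction \ref{pathCon}/$C_2$ move, hence is itself an instance of Construction \ref{pathCon} (gluing on a $C_2$ along the edge to be doubled, whose weight is $1$ — admissible since the $C_2$'s edges have weight $1$). Therefore $G$ is obtained from $K_4$ by Construction \ref{pathCon}, unless $G'$ was not defined because $G$ had no simple "core" — but the only $2$-connected multigraph with no simple core in the relevant sense is $C_2$ itself (when $G'$ degenerates to a single edge, which is not $2$-connected, meaning $G$ was a bunch of parallel edges on two vertices, and $(\spadesuit)_2$ forces exactly two of them, i.e. $G=C_2$). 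This gives the dichotomy in the statement.

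\textbf{Main obstacle.} The crux is pinning down the exact $\delta=2$ statement of the simple-graph classification from \cite{Hibi} and verifying that it forces $G'=K_4$ — in particular checking that the constructions of Propositions \ref{ConstrProp41} and \ref{ConstrProp42} genuinely degenerate at $\delta=2$ and produce nothing new, and confirming $K_4$ is the unique seed (the excerpt's Theorem \ref{ConstrThm43} is stated for a fixed $\delta$ with a "$\delta$-cycle" seed, but the main theorem of the paper already flags $K_4$ as the $\delta=2$ seed, so this is consistent). A secondary, more bookkeeping obstacle is the degenerate boundary case where $G$ is purely a multi-edge and $G'$ fails to be a graph at all — this must be handled separately to extract $C_2$, and one must confirm $(\spadesuit)_2$ rules out any other all-parallel multigraph.
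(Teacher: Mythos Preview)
Your approach has a genuine gap at the simplification step. You claim that applying Construction \ref{pathCon} with $G_2=C_2$ to an edge with a parallel partner ``absorbs one copy of a multi-edge,'' but this is not what the construction does. Tracing through Construction \ref{universalCon} with $\delta=2$, $F_1=\{e_1\}$ (weight $1$), and $F_2=\{e_2\}$ one edge of $C_2$ (weight $1$): the pair $e_1,e_2$ is replaced by $1+1-2=0$ edges, while the \emph{other} edge of $C_2$ survives between the same two vertices. The net effect on $G$ is the identity. More generally, $C_2$ is a neutral element for $2$-gluing (the paper notes this just before Theorem \ref{mainThm2}), so Example \ref{simplificationEx} produces nothing for $\delta=2$ and you cannot manufacture a simple $G'$ this way. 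A related error: the $\delta=2$ simple-graph classification in \cite{Hibi} is \emph{not} that $K_4$ is the only simple graph satisfying $(\spadesuit)_2$. The relevant result is Theorem 5.3 there, which says a $2$-connected simple graph satisfies $(\spadesuit)_2$ if and only if it is obtainable from $K_4$ by Construction \ref{pathCon}; your Propositions \ref{ConstrProp41}--\ref{ConstrProp42} and Theorem \ref{ConstrThm43} are stated and used only for $\delta>2$, so their degeneration at $\delta=2$ tells you nothing.

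The paper's proof bypasses simplification entirely and instead uses the equivalent condition $(\heartsuit)_2$ on two-element $2$-connected sets $S=\{v_1,v_2\}$: the equation $\abs{E(S)}+k(S)=2$ together with $\abs{E(S)}\geq 1$ forces $k(S)\in\{0,1\}$; then $k(S)=0$ means $S=V$, i.e.\ $G=C_2$, while $k(S)=1$ forces $\abs{E(S)}=1$. Running this over all adjacent pairs shows directly that $G$ is either $C_2$ or already simple, after which Theorem 5.3 of \cite{Hibi} applies verbatim. This is the step your plan is missing.
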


\begin{proof}
By Theorem 5.3 in \cite{Hibi}, a simple graph is Gorenstein if and only if it can be obtained with the Construction \ref{pathCon} from the clique $K_4$.
Further, by Example \ref{simplexEx}, $C_2$ indeed satisfies $(\spadesuit)_2$.
Hence, it only remains to show that every other multigraph satisfying $(\spadesuit)_2$ is simple.

Let $G$ be a $2$-connected multigraph satisfying $(\spadesuit)_2$. Equivalently, it satisfies $(\heartsuit)_2$. That means that for every $2$-connected set of vertices $S=\{v_1,v_2\}$, we obtain
\begin{align*}
    \abs{E(S)}+k(S) &= 2(\abs{S}-1) = 2.
\end{align*}
Since $E(S)$ needs to be a positive integer, we only have two possible values for $k(S)$: $0$ and $1$. But $k(S)=0$ implies that $S$ is the entire vertex set of $G$ which means that $G=C_2$. In the other case, the number of edges between $v_1$ and $v_2$ is $1$ and $G$ is simple.
\end{proof}

\end{document}